\newtheoremstyle{theorem}
  {12pt}          
  {12pt}  
  {\sl}  
  {\parindent}     
  {\bf}  
  {. }    
  { }    
  {}     
\theoremstyle{theorem}
\newtheorem{theorem}{Theorem}
\newtheorem{corollary}[theorem]{Corollary}
\newtheorem{remark}[theorem]{Remark}
\newtheorem{lemma}[theorem]{Lemma}
\newcommand{\ic}{\ensuremath{\mathcal{I}}}
\newcommand{\oc}{\ensuremath{\mathcal{O}}}
\newcommand{\tc}{\ensuremath{\mathcal{T}}}
\newcommand{\Pt}{\mathbb{P}^3}
\newcommand{\PtD}{\mathbb{P}_3^*}
\newcommand{\bZ}{\mathbb{Z}}
\newcommand{\aG}{\alpha}
\newcommand{\bG}{\beta}
\newcommand{\fG}{\varphi}
\newcommand{\eG}{\varepsilon}
\newcommand{\bds}{\begin{displaystyle}}
\newcommand{\eds}{\end{displaystyle}}
\title[Self-linked curves and normal bundle.]{Self-linked curves and normal bundle.}
\author{Ph. Ellia}
\address{Dipartimento di Matematica, 35 via Machiavelli, 44100 Ferrara}
\email{phe@unife.it}
\subjclass[2010] {14H50, 13C40} \keywords{Space curves, self-linkage, set theoretic complete intersections.}
\date{\today}
\begin{document}
\maketitle


\thispagestyle{empty}

\begin{abstract} We give necessary conditions on the degree and the genus of a smooth, integral curve $C \subset \Pt$ to be selk-linked (i.e. locus of simple contact of two surfaces). We also give similar results for set theoretically complete intersection curves with a structure of multiplicity three (i.e. locus of 2-contact of two surfaces).
\end{abstract}

\section*{Introduction.}
The motivation of this note is the following question, raised in \cite{Ha-Po}: doest there exists a smooth, integral curve $C \subset \Pt$, of degree $8$, genus $3$, which is self-linked? We recall that a curve is self-linked if it is the locus of (simple) contact of two surfaces (see Section 1). This question in turn is motivated by the following fact (proved in \cite{Ha-Po}, Proposition 7.5): let $S \subset \Pt$ be a surface with ordinary singularities. Let $C \subset S$ be a smooth, irreducible curve which is the set theoretic complete intersection (s.t.c.i.) of $S$ with another surface. If $C \not \subset Sing(S)$, then $C$ is self-linked (on $S$) (see Remark \ref{R-Ha-Po} for a precise statement). We recall that the problem to know whether or not every smooth irreducible curve $C \subset \Pt$ is a s.t.c.i. is still open. The study of self-linked curves is a first step in this long standing open problem. Self-linked curves have been studied by many authors (see \cite{Ha-Po} and the bibliography therein).

In this note we show that, as expected, no curve of degree $8$, genus $3$ is self-linked. This follows from our main result (Theorem \ref{T-cdt selfL}) which gives necessary conditions on the invariants of a curve to be self-linked. As a consequence we obtain that if $d \geq 13$ and $d>g-3$, then no curve of degree $d$, genus $g$ can be self-linked (Corollary \ref{C-c}).

In the last section we obtain similar results for curves which are set theoretic complete intersections with a triple structure.

Throughout this note we work over an algebraically closed field of characteristic zero.  

\section{Generalities.}

We denote by $C \subset \Pt$ a smooth, irreducible curve of degree $d$, genus $g$. The curve $C$ is \emph{self-linked} if it is (algebraically) linked to itself by a complete intersection $F_a\cap F_b$ of two surfaces of degrees $a, b$. In particular $2d = ab$. This is equivalent to say that there exists a double structure, $C_2$, on $C$ which is a complete intersection of type $(a,b)$.

Let's observe that if $C$ is not a complete intersection, then $C \cap Sing(F_a)\neq \emptyset$ and $C \cap Sing(F_b)\neq \emptyset$. This follows from the fact (see \cite{Ha-Po}, Lemma 7.6) that for a surface $S \subset \Pt$, $Pic(S)/Pic (\Pt )$ is a torsion free abelian group. 

The two surfaces $F_a, F_b$ are tangents almost everywhere along $C$. Moreover at every point $x\in C$ one of the two is smooth (otherwise the embedding dimension of the intersection would be three). So $F_a, F_b$ define a sub-line bundle $L \subset N_C$. Abusing notations $L= N_{C,F_a}= N_{C,F_b}$. The quotient $N_C^* \to L^*\to 0$ defines the double structure $C_2$, hence:
\begin{equation}
\label{eq: L*=I de C dans C1}
0 \to L^* \to \oc _{C_2} \to \oc _C \to 0
\end{equation}

By the exact sequence of liaison:
$$0 \to \ic _{C_2} \to \ic _C \to \omega _C(4-a-b) \to 0$$
we see that $\ic _{C,C_2} \simeq \omega _C(4-a-b)$. This means that $L^* = \omega _C(4-a-b)$. In particular:

\begin{equation}
\label{eq:genre C1}
\deg (L) =: l = d(a+b-4) - 2g + 2
\end{equation}

\begin{remark}
\label{R-Ex1}
If $C$ is a complete intersection, then $C$ is self-linked. If $C$ is a curve on a quadric cone, then $C$ is self-linked. In all these cases $N_C$ splits. 

On the other hand it is easy to give examples of curves which are not self-linked. Let $C \subset \Pt$ be a smooth, irreducible curve whose degree, $d$, is an odd prime number. Assume $h^0(\ic _C(2))=0$. If $C$ is self-linked by $F_a\cap F_b$, then $2d = ab, a \leq b$. Since $d$ is prime, $a=2$, in contradiction with the assumption $h^0(\ic _C(2))=0$.

A less evident fact: if $C \subset \Pt$ is a smooth subcanonical curve (i.e. $\omega _C \simeq \oc _C(a)$ for some $a \in \bZ$) which is not a complete intersection, then $C$ is not self-linked (see \cite{Beo-Ellia}).
\end{remark}

We can add a further class of examples:

\begin{lemma}
\label{L-C on Q}
Let $C$ be a smooth, irreducible curve lying on a smooth quadric $Q \subset \Pt$. If $C$ is not a complete intersection and $\deg (C) > 4$, then $C$ is never self-linked.
\end{lemma}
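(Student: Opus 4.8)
The plan is to work with the bidegree of $C$ on $Q \cong \Pu \times \Pu$. Write the class of $C$ as $(\alpha,\beta)$ with $\alpha \le \beta$, so that $\deg C = \alpha+\beta$ and $g = (\alpha-1)(\beta-1)$. Since a smooth curve on $Q$ is a complete intersection exactly when $\alpha=\beta$ (those being the curves $Q \cap F_\alpha$), the hypothesis that $C$ is not a complete intersection forces $\alpha < \beta$; moreover $\deg C = \alpha+\beta > 4$. I would argue by contradiction, assuming $C$ self-linked by $F_a \cap F_b$ with $a \le b$ and $ab = 2\deg C = 2(\alpha+\beta)$.

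The observation I would isolate is the following: \emph{any surface $F \subset \Pt$ containing $C$ but not containing $Q$ has degree $\ge \beta$}. Indeed, restricting such an $F$ to $Q$ gives an effective divisor in the class $(\deg F, \deg F)$ containing $C$, so the residual divisor $(F\cap Q) - C$ lies in the class $(\deg F - \alpha,\ \deg F - \beta)$; since every effective divisor on $\Pu\times\Pu$ has both bidegrees nonnegative, this forces $\deg F \ge \beta$. Applying this to $F_a$ and $F_b$, both of which contain $C$, I obtain $a \ge \beta$ and $b \ge \beta$, whence $\beta^2 \le ab = 2(\alpha+\beta) < 4\beta$, so $\beta \le 3$. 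Combined with $\alpha < \beta$ and $\alpha+\beta > 4$, the only surviving possibility is $(\alpha,\beta)=(2,3)$, i.e. $\deg C = 5$ and $ab = 10$; but $10$ admits no factorization $ab$ with $\beta = 3 \le a \le b$, a contradiction.

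The step that needs care — and which I expect to be the only genuine obstacle — is ruling out that $Q$ is a component of $F_a$ or of $F_b$, since then the residual computation does not apply directly. First, $Q$ cannot be a component of both: the complete intersection $F_a\cap F_b$ is the double structure $C_2$, which is supported on $C$, whereas $Q \subset F_a\cap F_b$ would force the whole surface $Q$ into that support. If $Q$ is a component of exactly one, say $F_a = Q \cup G$ with $\deg G = a-2$ and $C \not\subset G$, then at a general point of $C$ the surface $F_a$ carries the tangent plane of $Q$, so the tangency of $F_a$ and $F_b$ along $C$ makes $F_b$ tangent to $Q$ along $C$; hence $F_b \cap Q$ contains $2C$ and lies in the class $(b,b)$, giving $b \ge 2\beta$ and the even stronger contradiction $ab \ge 4\beta > 2(\alpha+\beta)$. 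The borderline case $C \subset G$ is handled by applying the residual argument to $G$ in place of $F_a$, which yields $a - 2 \ge \beta$, while $Q \not\subset F_b$ gives $b \ge \beta$ as before. With these degenerate cases disposed of, the bound $a,b \ge \beta$ holds unconditionally and the numerical contradiction above completes the proof.
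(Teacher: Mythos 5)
Your proof is correct and takes essentially the same route as the paper: restrict the surfaces to $Q$, use effectivity of the residual class $(\deg F-\alpha,\deg F-\beta)$ in $\mathrm{Pic}(\Pu\times\Pu)$ to bound the degrees of $F_a, F_b$ from below, and contradict $ab=2d$. The only differences are bookkeeping (the paper deduces $a>d/2$ from the nonempty residual on $F_a$ alone, while you use $a,b\ge\beta$ and square) and your welcome extra care with the degenerate case where $Q$ is a component of one of the surfaces, which the paper dismisses only when $F_a$ is a multiple of $Q$.
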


\begin{proof} Assume $C$ self-linked by $F_a\cap F_b$, $a \leq b$. Let $(\aG ,\bG )$, $\aG < \bG$, denote the bi-degree of $C$ on $Q$. If $F_a = Q$, then $F_b \cap Q$ is a curve of bi-degree $(b,b)=(2\aG , 2\bG )$. It follows that $\aG = \bG$ and $C$ is a complete intersection. So we may assume that $F_a$ is not a multiple of $Q$. The intersection $F_a\cap Q$ consists of $C$ and of curve $A$ of bi-degree $(a-\aG , a-\bG )$. Since $A$ is not empty ($C$ is not a complete intersection) we have $a > \aG$ and $a \geq \bG$. It follows that: $2a > \aG +\bG = d$. So $a > d/2$. Since $ab =2d$, we get $b=2d/a \geq a > d/2$, so $a \leq 3$ hence $d \leq 5$. If $d=5$, then $(a,b)=(2,5)$ in contradiction with $a > d/2$. Hence $d \leq 4$.
\end{proof}

If $d < 5$, then $C$ is rational or elliptic, see Theorem \ref{T-cdt selfL}. This lemma is in contrast with the fact that every curve on a quadric cone is self-linked.

\section{The Gauss map associated to $L \subset N_C$.}

We first recall some constructions associated to a sub-bundle of $N_C$. In what follow we don't assume $C$ self-linked, $C$ is just any smooth, irreducible curve not contained in a plane. If $L$ is a sub-bundle of $N_C$, then $L(-1) \subset N_C(-1)$ comes from a rank two vector bundle: $\tc _L \subset T_{\Pt}(-1)|C$. At each point $x\in C$, $\tc _L(x) \subset T_{\Pt}(-1)(x) = V/d_x$, defines a plane of $\Pt$ containing the tangent line $T_xC$. 

Local computations show that the plane $\tc _L(x)$ is the Zariski tangent plane to the double structure $C_2$ defined by $N_C^* \to L^* \to 0$.

Now the bundle $\tc _L$ defines the Gauss map $\fG _L: C \to D \subset \PtD$ ($\fG _L(x)=\tc _L(x)$). It is known that $\fG _L$ can't be constant and that $D$ can't be a line (\cite{EVdV}, \cite{Hulek-Sacchiero} Theorem 1.6). By the Nakano's exact sequence $\fG _L^*(\oc _{\PtD}(1)) = T_{\Pt}(-1)|C / F_L$, which has degree $d- \deg (F_L)$. Since $L(-1) = \tc _L/T(-1)_C$, we get: 
\begin{equation}
\label{eq:degréfGL}
\deg (\fG _L^*(\oc _{\PtD}(1))) = \deg (\fG _L).\deg (D) = 3d+2g-2-l
\end{equation}

Now consider the dual curve of $D$, $D^* \subset \Pt$ (defined by the osculating planes of $D$). The tangent surface $Tan(D^*)$ is called the \emph{characteristic surface of $L$} and is denoted by $S^\vee _L$. This surface is the envelope surface of the family of planes $\{ \tc _L(x)\} _{x\in C}$. Since the $\tc _L(x)$ are the tangent spaces to the double structure $C_2$, we have $C_2 \subset S^\vee _L$ (see also \cite{Ramella} Lemma 2.1.2).

\noindent If $D$ is a plane curve, then $S^\vee _L$ is the cone over the (plane) dual curve $D^*$.

We will need the following result, which is contained in \cite{Jaffe-On s.t.c.i}: 

\begin{lemma}
\label{L-d=9g=7}
A smooth, integral curve $C \subset \Pt$, of degree $9$, genus $7$ is never self-linked.
\end{lemma}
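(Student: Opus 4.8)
The plan is to reduce to a short list of possible bidegrees and then eliminate them one at a time, using the quadric lemma and the Gauss-map apparatus built up above. Suppose $C$ is self-linked by $F_a \cap F_b$ with $a \le b$. Then $2d = ab = 18$, and since a plane curve of degree $9$ has genus $28 \ne 7$ we must have $a \ge 2$; hence $(a,b) \in \{(2,9),(3,6)\}$.

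First I would dispose of the case $(2,9)$, where $C$ lies on a quadric $F_2$. As $C$ is irreducible and nondegenerate, $F_2$ is an irreducible quadric. If $F_2$ is smooth, then $C$ has bidegree $(\alpha,\beta)$ with $\alpha+\beta = 9$ and $g = (\alpha-1)(\beta-1)$, forcing $\alpha\beta = 15$; thus $\alpha,\beta$ would be roots of $t^2 - 9t + 15$, whose discriminant $21$ is not a square — impossible (this is also covered by Lemma \ref{L-C on Q}). If $F_2$ is a quadric cone, then since $9$ is odd $C$ must pass through the vertex, and a short computation on $\mathbb{F}_2$ shows that a smooth curve meeting the ruling once there has genus $12 \ne 7$. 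So $(2,9)$ does not occur.

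The work is in the case $(3,6)$, where $C$ lies on a cubic surface $F_3$ and $2C$ is cut out on $F_3$ by $F_6$. If $F_3$ were smooth, then $Pic(F_3)$ is torsion free and $2C \sim 6H$ would give $C \sim 3H$, a curve of genus $10 \ne 7$; hence $F_3$ is singular and $C$ meets $Sing(F_3)$, in accordance with the observation of Section 1. To treat the singular cubic uniformly I turn to the Gauss map. The formula for $l$ gives $l = 9(3+6-4) - 14 + 2 = 33$, and the degree formula for $\fG_L$ then yields $\deg(\fG_L)\cdot \deg(D) = 3d + 2g - 2 - l = 6$. Since $\fG_L$ is nonconstant and $D$ is not a line, $\deg D \ge 2$, so $(\deg\fG_L,\deg D)$ is one of $(3,2),(2,3),(1,6)$. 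The key device is that $C_2 \subset S^\vee_L$, and that when $D$ is a plane curve $S^\vee_L$ is the cone over the dual curve $D^*$, which forces $C$ onto a cone whose degree is read off from the class of $D^*$. For $(3,2)$, $D$ is a smooth conic, $D^*$ is a conic, and $S^\vee_L$ is a quadric cone: $C$ would again lie on a quadric, contradicting the analysis of $(2,9)$. For $(1,6)$, $\fG_L$ is birational and $D$ has degree $6$ and geometric genus $7$; the Castelnuovo bound (max genus $4$ for a nondegenerate sextic in $\Pt$) forces $D$ to be a plane sextic with three nodes, so once more $S^\vee_L$ is a cone, of large degree, on which $C$ must lie.

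The main obstacle is the remaining case $(2,3)$ together with controlling the cone configurations uniformly. Here $\fG_L$ is a degree-$2$ map onto a cubic $D$: either a twisted cubic, making $C$ hyperelliptic, or a plane cubic, giving $C$ a map of small degree to a curve of genus $\le 1$ and placing $C$ on the cone over $D^*$. I would rule these out by combining two ingredients: a gonality/Clifford-index bound showing that a smooth curve of degree $9$, genus $7$ in $\Pt$ cannot carry a $g^1_2$ or a $g^1_3$ compatible with the embedding bundle $\oc_C(1)$; and a direct examination of the cones $S^\vee_L$ produced over $D^*$, showing that an integral curve of degree $9$, genus $7$ cannot lie on them without violating either its genus or the class relation $2C = F_6|_{F_3}$. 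Assembling these eliminations completes the proof, the delicate point throughout being the geometry of the singular surfaces $F_3$ and $S^\vee_L$ through which $C$ is forced to pass.
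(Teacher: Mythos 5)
Your overall strategy is genuinely different from the paper's: the paper disposes of the $(2,9)$ case implicitly and then analyzes the cubic surface $F_3$ directly, splitting into $F_3$ normal, ruled, or a cone, and in each case quoting results of Jaffe (\cite{Jaffe-On s.t.c.i}) on set-theoretic complete intersections on cubic surfaces. You instead run the Gauss-map machinery of Section 2 on the specific invariants $d=9$, $g=7$, $(a,b)=(3,6)$, obtaining $r=\deg(\fG_L)\cdot\deg(D)=6$ and splitting into $(\deg\fG_L,\deg D)=(3,2),(2,3),(1,6)$. This is not circular (the Gauss-map apparatus does not depend on the lemma), and it is in fact the natural self-contained route, since the lemma is invoked in Theorem \ref{T-cdt selfL} precisely to kill the residual case $r=6$, $d=9$, $g=7$. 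Your $(2,9)$ elimination and the case $(3,2)$ (which puts $C$ on a quadric cone, impossible for $(d,g)=(9,12)\neq(9,7)$) are correct. The case $(1,6)$ is essentially correct but you stop one line short: you should say explicitly that a plane sextic of geometric genus $7$ has $\delta=3$, hence class $\deg(D^*)=2\cdot 6+2\cdot 7-2-c\geq 24-3=21$, while $C$ unisecant on the cone over $D^*$ forces $d=\deg(D^*)+\eG=9$ --- that is the contradiction, not merely that the cone has ``large degree.''

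The genuine gap is the case $(2,3)$, which you yourself flag as the main obstacle and then only outline. The gonality ingredient you invoke (that a smooth $(9,7)$ curve in $\Pt$ carries no $g^1_2$ or $g^1_3$ ``compatible with the embedding'') is a nontrivial claim that you neither prove nor reference; hyperellipticity can indeed be excluded (a degree-$9$ line bundle on a hyperelliptic genus-$7$ curve is never very ample, by a Riemann--Roch computation with $K-L$ of degree $3$), but the bielliptic possibility ($D$ a smooth plane cubic) and the trigonal-type configurations are not addressed by such a bound and require the cone argument instead. That cone argument also needs to be carried out, not just announced: e.g.\ for $D$ a nodal plane cubic one gets $\deg(D^*)=4$ and $d=2\cdot 4+1=9$, which is numerically consistent, so the contradiction must come from elsewhere --- namely that any quartic surface containing $C_2=F_3\cap F_6$ lies in $(F_3,F_6)$ in degree $4$, hence equals $F_3$ times a plane and is reducible, whereas the cone over an irreducible quartic $D^*$ (or the tangent developable of a twisted cubic) is irreducible. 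As written, the proposal establishes the easy subcases and correctly identifies the hard one, but does not close it; either supply these arguments or fall back on the paper's route through Jaffe's classification of s.t.c.i.\ curves on normal, ruled and cone cubic surfaces.
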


\begin{proof} If $C$ is self linked it is by a complete intersection of type $(3,6)$. If the cubic surface, $F_3$, is normal, then by (the proof of) Theorem 3.1 in \cite{Jaffe-On s.t.c.i}, we should have $9.6 \leq 6.7$, which is not the case. If the cubic is ruled we conclude with Propositions 3.4, 3.5 of \cite{Jaffe-On s.t.c.i}. Finally if $F_3$ is a cone, it has to be the cone over a smooth cubic curve (see the proof of Theorem 5.1 of \cite{Jaffe-On s.t.c.i}). But a degree $9$ curve on such a cone is a complete intersection $(3,3)$, hence has genus $10$.
\end{proof}  

Now we can state and prove our main result:

\begin{theorem}
\label{T-cdt selfL}
Let $C \subset \Pt$ be a smooth, irreducible curve of degree $d$, genus $g$. Assume $d \geq 5$ and $h^0(\ic _C(2))=0$. If $C$ is self-linked by a complete intersection of type $(a,b)$, then one of the following occurs:

\noindent $g=3, d=6$ and $(a,b)=(3,4)$, or:
\begin{equation}
\label{eq:cdt self}
g \geq 4 \text{ and }4g \geq d(a+b-7)+12
\end{equation}
\end{theorem}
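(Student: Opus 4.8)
The plan is to convert the asserted inequality into a single lower bound on the degree of the Gauss map $\fG _L$, and then to derive that bound from the inclusion $C_2\subset S^\vee _L$. Substituting $l=d(a+b-4)-2g+2$ into the degree formula $\deg(\fG _L)\cdot\deg(D)=3d+2g-2-l$ yields
\[
\deg (\fG _L)\cdot \deg (D)=3d+2g-2-l=4g-4-d(a+b-7).
\]
Thus the inequality $4g\geq d(a+b-7)+12$ is \emph{equivalent} to $\deg(\fG _L)\cdot\deg(D)\geq 8$, and the listed exceptional triple $g=3,\ d=6,\ (a,b)=(3,4)$ is exactly the borderline where this product equals $8$ while $g<4$. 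So the whole statement reduces to proving $\deg(\fG _L)\cdot\deg(D)\geq 8$: the genus bookkeeping is then formal, since $g\leq 2$ would force $a+b\leq 6$, hence $(a,b)=(3,3)$ and $d=9/2\notin\bZ$, while $g=3$ forces $a+b\leq 7$, leaving only $(a,b)=(3,4),\ d=6$.

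First I would record the cheap constraints. As $C\subset F_a$ and $h^0(\ic _C(2))=0$, the quadric case is excluded, so $a\geq 3$; as $D$ is not a line (\cite{EVdV},\cite{Hulek-Sacchiero}), $\deg(D)\geq 2$; and $\deg(\fG _L)\geq 1$. By themselves these give only $\deg(\fG _L)\cdot\deg(D)\geq 2$, i.e. the much weaker bound $4g\geq d(a+b-7)+6$; the real point is to gain the remaining $6$. For this I would exploit $C_2\subset S^\vee _L$. Since $\ic _{C_2}=(F_a,F_b)$ contains no form of degree $<a$, the irreducible surface $S^\vee _L$ has degree $\geq a$; and unless $S^\vee _L=F_a$, Bezout applied to $C_2\subseteq F_a\cap S^\vee _L$ gives $2d=\deg(C_2)\leq a\cdot\deg(S^\vee _L)$, hence $\deg(S^\vee _L)\geq b$. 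On the other hand $S^\vee _L=Tan(D^*)$ is a tangent developable, so its degree equals the rank of $D$, which is at most $2\deg(D)+2g(D)-2$ with $g(D)\leq g$. Combining these bounds with $ab=2d$ and $3\leq a\leq b$ should pin down $\deg(\fG _L)\cdot\deg(D)\geq 8$ away from the degenerate configurations.

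The main obstacle is precisely those degenerate and small-degree configurations. When $\deg(D)\in\{2,3\}$ with $\deg(\fG _L)$ small, or when $S^\vee _L$ equals $F_a$ or $F_b$ (so that $F_a$ is itself developable), or when $S^\vee _L$ is a cone (so that $D$ is a plane curve), the numerical estimate is inconclusive and the geometry of $C$ collapses: $C$ then lies on a smooth quadric or on a cone, or its invariants specialise to a pair such as $(d,g)=(9,7)$. These are exactly the situations settled by Lemma \ref{L-C on Q} and Lemma \ref{L-d=9g=7}, after a case analysis of the surface $F_a$ (normal, ruled, or a cone) in the spirit of \cite{Jaffe-On s.t.c.i}. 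The unique borderline where $\deg(\fG _L)\cdot\deg(D)=8$ survives with $g<4$ is $(d,g,(a,b))=(6,3,(3,4))$, which is recorded as the exceptional case; for $g\geq 4$ one reads off the stated inequality.
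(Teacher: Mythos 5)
Your reduction of the theorem to the single inequality $\deg (\fG _L)\cdot \deg (D)\geq 8$ is correct and is exactly how the paper's proof begins: with $r:=\deg (\fG _L)\cdot \deg (D)=4g-4-d(a+b-7)$, the genus bookkeeping ($g\leq 2$ impossible, $g=3$ forcing $(a,b,d)=(3,4,6)$) goes through as you describe. But proving $r\geq 8$ is the entire content of the theorem, and the mechanism you propose for it does not work. In the case $S^\vee _L\neq F_a$ you would get $\deg (S^\vee _L)\geq b$ together with $\deg (S^\vee _L)=\mathrm{rank}(D)\leq 2\deg (D)+2g(D)-2$; but when $\fG _L$ is birational $C$ is the normalization of $D$, so $g(D)=g$, and the resulting bound $\deg (D)\geq (b-2g+2)/2$ is vacuous precisely in the regime at stake (the inequality to be proved forces $g$ to grow like $d\sqrt{d}$, while $b\leq d$ since $ab=2d$). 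So the Bezout/rank combination gains nothing beyond the trivial estimate. Note also that $h^0(\ic _C(2))=0$ already gives $\deg (D)\geq 3$, not just $\geq 2$: if $\deg (D)=2$ then $C$ lies on the quadric cone over the dual conic $D^*$. Hence the true starting point is $r\geq 3$, and the gap to be closed is from $3$ to $8$.

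The paper closes that gap by eliminating $r=3,\dots ,7$ one value at a time, which is the part your proposal leaves essentially blank. For each such $r$ it splits on whether $\fG _L$ is birational: if so, $D$ is integral of degree $r$ and geometric genus $g$, and either Castelnuovo's bound $g\leq G(r,2)\leq 6$ kills the nonplanar case, or $D$ is a plane curve, $C$ is a unisecant of the cone over $D^*$, and the relation $d=\deg (D^*)+\eG$ is confronted with the Pl\"ucker degree of the dual plane curve; if not, then necessarily $r=6$, $\deg (\fG _L)=2$, $\deg (D)=3$, handled by the twisted-cubic versus plane-cubic dichotomy. Every branch is finished off by the arithmetic $4g-4-r=d(a+b-7)$, $2d=ab$, $b\geq a\geq 3$. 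Your claim that the degenerate configurations are ``settled by Lemma \ref{L-C on Q} and Lemma \ref{L-d=9g=7}'' is misplaced: Lemma \ref{L-C on Q} is not used in the proof of the theorem at all (it only shows the hypothesis $h^0(\ic _C(2))=0$ is harmless), and Lemma \ref{L-d=9g=7} disposes of exactly one residual case, $(d,g,a,b)=(9,7,3,6)$ at $r=6$. The heart of the argument is therefore missing from your proposal.
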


\begin{proof} From (\ref{eq:genre C1}) and \ref{eq:degréfGL} we get  
\begin{equation}
\label{eq: deg fG_L(1) selflinked} 
r:= \deg (\fG ^*_L(\oc _{\PtD}(1)) = \deg (\fG _L).\deg (D) = 4g-4-d(a+b-7)
\end{equation}
Hence we have:
\begin{equation}
\label{eq:pf thm}
4g-4-r = d(a+b-7) \text{ and } 2d = ab.
\end{equation}
The assumption $h^0(\ic _C(2))=0$ implies $b \geq a \geq 3$ and $\deg (D) \geq 3$. Indeed we already know that $\deg (D)\geq 2$. If we have equality, then $C \subset S^\vee _L$ which is a cone over the dual conic $D^*$. So we have: $r\geq 3$.

If $g \leq 1$, $4g-4-d(a+b-7) \geq 3$ implies $a+b \leq 6$, hence $(a,b)=(3,3)$, which is impossible. So $g \geq 2$. If $2 \leq g \leq 3$, we get $(a,b)=(3,4)$, hence $d=6$. Moreover $r=4$ if $g=2$ and $r=8$ if $g=3$.

Assume first that $\fG _L$ is bi-rational. Then $D \subset \PtD$ is an integral curve of degree $r$ and geometrical genus $g$. If $D$ is not contained in a plane, then $g \leq p_a(D) \leq G(r,2)$, where $G(r,2)$ is given by the Halphen-Castelnuovo's bound: $G(r,2)=(r-2)^2/4$ if $r$ is even, $G(r,2)=(r-1)(r-3)/4$, if $r$ is odd. It follows that $g \leq G(7,2)=6$. 
Since $g \geq 2$ we immediately get $r \geq 5$. From what we said above, this implies $g \geq 3$, hence $d \geq 6$. We have $4g-4-r \leq 15$ and from (\ref{eq:pf thm}), since $d \geq 6$, $a+b-7 \leq 2$. It follows that $(a,b;d)=(3,4;6), (4,4;8),(3,6;9), (4,5;10)$. From (\ref{eq:pf thm}) we get: $4(g-1)=r, r+8, r+18, r+20$ and we see that there is no solution with $5 \leq r \leq 7$, $3 \leq g \leq 6$.

In conclusion if $r \leq 7$ and if $\fG _L$ is bi-rational, then $D$ is a plane curve of degree $r$ and geometric genus $g \geq 2$. We have $2 \leq g \leq (r-1)(r-2)/2 = p_a(D)$. Moreover $C_2$ lies on the cone, $K$, over the (plane) dual curve $D^*$. Finally since $\fG _L$ is bi-rational, $C$ is a unisecant on the cone $K$. This implies that $\deg (D^*)+\eG = d\,\,\,(+)$, where $\eG =1,0$, according to whether $C$ passes through the vertex of the cone or not.

Since $g \geq 2$, we get $r \geq 4$. 

If $r =4$ then $2 \leq g \leq 3$ and we already know that $d=6$. If $g=3$, $D$ is smooth and $\deg (D^*)=12$, in contradiction with $(+)$. If $g=2$, $D$ has one double point which can be a node, a cusp or a tacnode. It follows that $\deg (D^*)= 10, 9$ or $8$. In any case we get a contradiction with $(+)$.

If $r=5$, then $2 \leq g \leq 6$ and from (\ref{eq:pf thm}) we get $4g-9=d(a+b-7)$. Since $d \geq 5$, the cases $2 \leq g \leq 3$ are impossible. If $g=4$, the only possibility is $d=7, a+b=8$. Hence $a=b=8$, but then again $d=ab/2=8$: contradiction. In the same way we see that the cases $g=5,6$ are impossible.

If $r=6$ then $2 \leq g \leq 10$ and $4g-10=d(a+b-7)$, with $d=ab/2$. Observe that if $a+b-7=1$, then $a=b=4$ and $d=8$, if $a+b-7=2$, then $(a,b,d)=(3,6,9)$ or $(4,5,10)$. We get that for $g < 10$ the only possibility is $g=7, d=9, (a,b)=(3,6)$, which is excluded by Lemma \ref{L-d=9g=7}. Finally if $g=10$, then $D$ is smooth. It follows that $d = \deg (D^*)+\eG = 30+\eG$. Since (\ref{eq:pf thm}) yields $30=d(a+b-7)$, we get $d=30$ and $a=b=4$, which is impossible.

If $r=7$ then $2 \leq g \leq 15$ and $4g-11 = d(a+b-7)$. For most values of $g\leq 15$, $4g-11$ is a prime number and anyway it always has a simple factorization into prime numbers. Bearing in mind that if $a+b-7=1$, then $a=b=4$ and $d=8$; if $a+b-7=2$ the $(a,b,d) = (3,6,9)$ or $(4,5,10)$ and if $a+b-7 = 3$, then $(a,b,d)=(4,6,12)$, we easily see that there are no solutions.

In conclusion if $r \leq 7$ and $\fG _L$ is bi-rational, then the only possibility is for $r=6$, $d=9$, $g=7$ and $(a,b)=(3,6)$ (in this case $D$ is a plane curve with a triple point).

Now for $3 \leq r \leq 7$, $r = \deg (\fG _L).\deg (D)$ and $\deg (D) \geq 3$, we see that if $\fG _L$ is not bi-rational, then $r=6$, $\deg (\fG _L)=2$ and $\deg (D)=3$.

If $D$ is not contained in a plane it is a twisted cubic. The dual curve $D^*$ is again a twisted cubic and $S^\vee = Tan(D^*)$ is a quartic surface. Since $C_2 \subset S^\vee$, $S^\vee = AF_a+BF_b$. If $b > 4$, it follows that $F_a = S^\vee$, i.e. $a=4$. From (\ref{eq:pf thm}) we get: $4g = d(d-6)/2 + 10$. Since $b=d/2$, $d$ is even, hence $d\equiv 0,2 \pmod{4}$ and we see that the previous equation never gives an integral value for $g$. This shows $b \leq 4$, hence $(a,b,d)= (3,4,6), (4,4,8)$. Plugging these values into (\ref{eq:pf thm}) we get a contradiction.

It follows that $D$ must be a cubic plane curve. If $D$ is smooth (has a node, a cusp), then $\deg (D^*)=6$ ($4$ or $3$). Since $\fG _L$ has degree two, $C$ is a bi-secant on the cone $S^\vee$ over $D^*$. It follows that $d = 2\deg (D^*)+\eG$. Since $C_2 \subset S^\vee$, $S^\vee = AF_a+BF_b$. If $b > \deg (D^*)$, then $F_a=S^\vee$ and $a = \deg (D^*)$. It follows that $b = 2d/\deg (D^*)$. This implies $b=4$. It follows that $(a,b,d)=(3,4,6), (4,4,8)$. Plugging these values into (\ref{eq:pf thm}) we get a contradiction.

In conclusion we must have $r \geq 8$. 
\end{proof}

\begin{remark} Because of Lemma \ref{L-C on Q} the assumption $h^0(\ic _C(2))=0$ is harmless.

There exist smooth curves of degree $6$, genus $3$ which are self-linked (\cite{Gallarati}, \cite{Ellia-63}).

This improves Theorem 7.8 of \cite{Ha-Po}. It follows from (\ref{eq:cdt self}) that no curve of degree $8$, genus $3$ can be self-linked. This answers to a question raised in \cite{Ha-Po} (Introduction and Remark 7.19).
\end{remark}

\begin{corollary}
\label{C-c}
let $C \subset \Pt$ be a smooth, irreducible curve of degree $d > 4$, genus $g$, with $h^0(\ic _C(2))=0$. If $C$ is self-linked, then:
\begin{equation}
\label{eq: g > sqrt 8d}
g \geq \frac{d(\sqrt{8d}-7)}{4}+3
\end{equation}
Moreover if $d \geq 13$ and $d > g-3$ no curve of degree $d$, genus $g$ can be self-linked.
\end{corollary}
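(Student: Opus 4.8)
The plan is to feed the inequality (\ref{eq:cdt self}) from Theorem \ref{T-cdt selfL} into the arithmetic constraint $2d=ab$ with $3 \leq a \leq b$, and to extract the lower bound for $g$ by minimizing $a+b$ via the arithmetic-geometric mean inequality.

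First I would note that since $d>4$ and $h^0(\ic _C(2))=0$, Theorem \ref{T-cdt selfL} applies: any self-linkage is realized by a pair $(a,b)$ with $3 \leq a \leq b$ and $ab=2d$, and in every case the bound $4g \geq d(a+b-7)+12$ holds (the exceptional case $g=3,\ d=6,\ (a,b)=(3,4)$ satisfies it with equality, since $4\cdot 3 = 6\cdot 0 + 12$). Because $a+b \geq 2\sqrt{ab} = \sqrt{8d}$, substituting gives $4g \geq d(\sqrt{8d}-7)+12$, which rearranges at once to (\ref{eq: g > sqrt 8d}). I would then verify that the exceptional pair $(g,d)=(3,6)$ also obeys this displayed bound: there the right-hand side is $\tfrac{6(\sqrt{48}-7)}{4}+3 < 3 = g$, so no case is lost.

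For the \emph{moreover} part I would argue by contradiction. Suppose $d \geq 13$, $d > g-3$, and $C$ is self-linked. Since $d \neq 6$, Theorem \ref{T-cdt selfL} puts us in the regime $g \geq 4$ with $4g \geq d(a+b-7)+12$. The hypothesis $d>g-3$ reads $g<d+3$, so $4g < 4d+12$. Comparing the two bounds on $4g$ gives $d(a+b-7) < 4d$, that is $a+b < 11$, whence $a+b \leq 10$. The crucial point is the order of operations: one keeps $a+b$ intact through this step and only invokes the arithmetic-geometric mean afterwards, namely $\sqrt{8d} \leq a+b \leq 10$, which forces $8d \leq 100$, i.e. $d \leq 12$, contradicting $d \geq 13$.

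The one thing to be careful about is exactly this final substitution. Replacing $a+b$ by $\sqrt{8d}$ prematurely (before exploiting $g<d+3$) would only yield $\sqrt{8d}<11$, i.e. $d \leq 15$, which is too weak to reach $d\geq 13$. Equivalently, one can avoid the inequality altogether and simply enumerate the integer factorizations $ab=2d$ with $3 \leq a \leq b$ and $a+b \leq 10$: the largest even product attainable is $4\cdot 6 = 24$, giving $2d \leq 24$ and hence $d \leq 12$, so $d \geq 13$ is again impossible. This discrete check is the cleanest way to close the argument and confirms there is no genuine analytic obstacle here.
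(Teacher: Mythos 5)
Your handling of the displayed inequality and the arithmetic of the \emph{moreover} clause coincides with the paper's proof: minimize $a+b$ at $2\sqrt{2d}=\sqrt{8d}$ via AM--GM for (\ref{eq: g > sqrt 8d}), and keep $a+b$ as an integer for the second claim (the paper argues $2d=ab\geq 26$ forces $a+b\geq 11$, hence $4g\geq 4d+12$, i.e.\ $d\leq g-3$; you run the same computation in the contrapositive direction, and your warning about not substituting $\sqrt{8d}$ prematurely is exactly the right point). Your explicit check that the exceptional case $g=3$, $d=6$, $(a,b)=(3,4)$ satisfies (\ref{eq:cdt self}) with equality is a detail the paper leaves implicit. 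The one genuine gap concerns the scope of the \emph{moreover} clause. As the paper's own proof reveals (it opens with ``a curve with $d>g-3$ and $d\geq 13$ cannot lie on a quadric cone''), that clause is intended as a statement about \emph{every} curve of degree $d$ and genus $g$, not only those with $h^0(\ic_C(2))=0$; this is also the reading supported by the remark that, thanks to Lemma \ref{L-C on Q}, the assumption $h^0(\ic_C(2))=0$ is ``harmless''. The hypothesis cannot simply be carried along here, because curves with $h^0(\ic_C(2))\neq 0$ are not automatically safe: every curve on a quadric cone \emph{is} self-linked (Remark \ref{R-Ex1}). One must therefore check separately that a curve on a quadric cone with $d\geq 13$ has genus far exceeding $d-3$ (for $d=2k$ resp.\ $2k+1$ one has $g=(k-1)^2$ resp.\ $k(k-1)$), and that curves on a smooth quadric are handled by Lemma \ref{L-C on Q}. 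Your argument, which goes through Theorem \ref{T-cdt selfL} alone, silently restricts to $h^0(\ic_C(2))=0$ and so proves a slightly weaker statement than the one the paper intends.
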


\begin{proof} If $2d = ab, a \geq 2$, then $a+b$ varies from $d+2$ ($a=2, b=d$) to $2\sqrt{2d}$ ($a=b=\sqrt{2d}$). The inequality then follows from (\ref{eq:cdt self}).

A curve with $d > g-3$ and $d \geq 13$ cannot lie on a quadric cone. Moreover if $d \geq 13$, then $2d = ab \geq 26$. It follows that $a+b \geq 11$ and inequality (\ref{eq:cdt self} is never satisfied if $d > g-3$.
\end{proof}

\begin{remark}
\label{R-Ha-Po}
A reduced surface $S \subset \Pt$ is said to have \emph{ordinary singularities} if its singular locus consists of a double curve, $R$, the surface having transversal tangent planes at most points of $R$, plus a finite number of pinch points and non-planar triple points. As proved in \cite{Ha-Po}, Proposition 7.5, if a smooth curve is a set theoretic complete intersection on $S$ with ordinary singularities and if $C \not \subset Sing(S)$, then $C$ is self-linked (on $S$).
\end{remark}

\section{Triple structures.}

To conclude let's see how this approach applies also to set theoretic complete intersections (s.t.c.i.) with a triple structure. Assume $F_a \cap F_b = C_3$, where $C_3$ is a triple structure on a smooth, irreducible curve of degree $d$, genus $g$ (i.e. $C_3$ is a locally Cohen-Macaulay (in our case l.c.i.) scheme with $Supp(C_3)=C$ and $ab=3d$). The complete intersection $F_a\cap F_b$ links $C$ to a double structure, $C_2$, on $C$. By liaison we have: $p_a(C_2)-g = d(a+b-4)/2$. Now $C_2$ (which as any double structure on $C$ is a locally complete intersection curve) corresponds to a sub-line bundle $L \subset N_C$. From the exact sequence (\ref{eq: L*=I de C dans C1}), we get:
\begin{equation}
\label{eq: degL triple}
l:= \deg (L) = \frac{d}{2}(a+b-4)-g+1
\end{equation}

\begin{theorem}
\label{T-triple}
Let $C \subset \Pt$ be a smooth, connected curve of degree $d$, genus $g$. Assume $C$ does not lie on a plane nor on a quadric cone. If $C$ is a s.t.c.i. with a triple structure of two surfaces of degrees $a, b$, then:
\begin{equation}
\label{eq: cdt stci triple}
3g \geq \frac{d}{2}(a+b -10) +6
\end{equation}
In particular: $g \geq \dfrac{d}{6}(\sqrt{12d} - 10)+1$.
\end{theorem}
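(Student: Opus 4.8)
The plan is to closely mirror the strategy behind Theorem \ref{T-cdt selfL}, now feeding the Gauss-map machinery of Section 2 (available because $C$ is not planar) with the sub-line bundle $L \subset N_C$ attached to the residual double structure $C_2$, the curve linked to $C$ by $F_a \cap F_b = C_3$. Writing $r := \deg(\fG_L^*(\oc_{\PtD}(1))) = \deg(\fG_L)\cdot\deg(D)$, the key observation is that inequality (\ref{eq: cdt stci triple}) is a mere restatement of the single geometric inequality $r \geq 3$. So the whole task reduces to proving this lower bound.

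First I would record the numerical translation. The degree formula for the Gauss map established in Section 2 reads $r = 3d + 2g - 2 - l$, and $l$ is given by (\ref{eq: degL triple}). A direct substitution yields
\begin{equation*}
r = 3d + 3g - 3 - \frac{d}{2}(a+b-4),
\end{equation*}
and an elementary rearrangement shows that $r \geq 3$ holds if and only if $3g \geq \frac{d}{2}(a+b-10)+6$, which is exactly (\ref{eq: cdt stci triple}). Thus the inequality is \emph{equivalent} to $r \geq 3$, and this is the only thing left to prove.

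The hard part is precisely this bound $r \geq 3$, and it is here that the hypotheses on $C$ enter. By the results recalled in Section 2 (\cite{EVdV}, \cite{Hulek-Sacchiero}), $\fG_L$ is non-constant and $D$ is not a line, so $\deg(D) \geq 2$ and hence $r \geq 2$. The only way to have $r = 2$ is $\deg(\fG_L) = 1$, $\deg(D) = 2$ (since $D$ is not a line). But then $D$ is an irreducible plane conic, its dual $D^*$ is again a conic, and the characteristic surface $S^\vee_L$, being the cone over the plane curve $D^*$, is a quadric cone; since $C \subset C_2 \subset S^\vee_L$, the curve $C$ would lie on a quadric cone, against our hypothesis. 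Hence $\deg(D) \geq 3$, so $r \geq 3$ and (\ref{eq: cdt stci triple}) follows.

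Finally, for the closing estimate I would eliminate $a$ and $b$. From $ab = 3d$ the arithmetic-geometric mean inequality gives $a+b \geq 2\sqrt{ab} = \sqrt{12d}$; plugging this into (\ref{eq: cdt stci triple}) and dividing by $3$ yields $g \geq \frac{d}{6}(\sqrt{12d}-10)+2$, which is even slightly stronger than the stated bound $g \geq \frac{d}{6}(\sqrt{12d}-10)+1$. The computations of the first two paragraphs are routine bookkeeping; the genuine content, and the only place where geometry rather than algebra intervenes, is the exclusion of the conic case in the third paragraph.
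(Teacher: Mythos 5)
Your proposal is correct and follows essentially the same route as the paper: compute $r=\deg(\fG_L)\cdot\deg(D)=3g-3-\frac{d}{2}(a+b-10)$ from (\ref{eq:degr�fGL}) and (\ref{eq: degL triple}), rule out $r=2$ because that would force $C$ onto a quadric cone (the cone over the dual conic $D^*$), and then apply AM--GM to $ab=3d$. Your observation that the argument actually yields the constant $+2$ rather than $+1$ in the final bound is accurate, but this is a marginal sharpening, not a different method.
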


\begin{proof} As before we consider the Gauss map $\fG _L$. By (\ref{eq:degréfGL}) and (\ref{eq: degL triple}), we have:
$$r := \deg (\fG _L).\deg (D) = 3g-3 - \frac{d}{2}(a+b-10).$$
We know that $r \geq 2$ and if equality $C$ lies on a quadric cone. So we may assume $r \geq 3$ and (\ref{eq: cdt stci triple}) follows. For the second inequality, if $ab=3d$, then $a+b \geq 2\sqrt{3d}$.
\end{proof}

Combining with Corollary \ref{C-c} we get:

\begin{corollary}
Let $C \subset \Pt$ be a smooth, connected curve of degree $d$, genus $g$. If $C$ is not contained in a plane nor in a quadric cone and if $g < \dfrac{d(\sqrt{12d}-10)+6}{6}$, then $C$ cannot be a s.t.c.i. with a structure of multiplicity $m \leq 3$.
\end{corollary}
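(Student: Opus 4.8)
The plan is to treat the three possible multiplicities $m\in\{1,2,3\}$ separately and to observe that the bound in the statement is exactly the genus threshold produced by the triple-structure case (Theorem \ref{T-triple}), which happens to be the \emph{smallest} of the three thresholds. Write $T_3 = \frac{d(\sqrt{12d}-10)+6}{6} = \frac{\sqrt3}{3}d^{3/2}-\frac{5}{3}d+1$ for the right-hand side of the hypothesis. Assuming $C$ is a s.t.c.i. $F_a\cap F_b$ with $ab=md$, $m\leq 3$, and $g<T_3$, I would derive a contradiction in each case. A preliminary remark is useful: since $g\geq 0$, the inequality $g<T_3$ can hold only when $T_3>0$, and a direct check shows this forces $d\geq 7$; in particular $d>4$, so the degree hypotheses of Corollary \ref{C-c} and Lemma \ref{L-C on Q} will be available for free.

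For $m=1$ the curve $C$ is an honest complete intersection of type $(a,b)$ with $ab=d$ and $a,b\geq 2$ (as $C$ is not planar), so its genus equals $\frac{d}{2}(a+b-4)+1$. By AM--GM, $a+b\geq 2\sqrt{d}$, hence $g\geq d^{3/2}-2d+1$, and an elementary comparison gives $d^{3/2}-2d+1\geq T_3$ for every $d\geq 1$, contradicting $g<T_3$. For $m=3$ the conclusion is immediate: the hypotheses of Theorem \ref{T-triple} (not on a plane nor a quadric cone) hold, and that theorem yields $g\geq T_3$, a contradiction.

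The case $m=2$ is where $C$ is self-linked, and it needs the most care because Corollary \ref{C-c} carries the extra hypothesis $h^0(\ic_C(2))=0$. If that vanishing holds, Corollary \ref{C-c} gives $g\geq T_2 := \frac{d(\sqrt{8d}-7)}{4}+3$, and I would conclude by checking $T_2\geq T_3$: their leading coefficients satisfy $\frac{\sqrt2}{2}>\frac{\sqrt3}{3}$, and one verifies $T_2-T_3 = (\frac{\sqrt2}{2}-\frac{\sqrt3}{3})d^{3/2}-\frac{1}{12}d+2>0$ for all $d$, so $g\geq T_2\geq T_3$ again contradicts $g<T_3$. If instead $h^0(\ic_C(2))\geq 1$, then $C$ lies on a quadric $Q$; since $C$ is irreducible and non-planar every quadric through it is irreducible, and since $C$ is not on a quadric cone, $Q$ is smooth. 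The $m=1$ analysis already showed that $g<T_3$ forces $C$ not to be a complete intersection, so with $d>4$ Lemma \ref{L-C on Q} applies and says $C$ is never self-linked, contradicting $m=2$.

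The main obstacle is not any single deep step but the bookkeeping in the $m=2$ branch: one must dispose of the auxiliary hypothesis $h^0(\ic_C(2))=0$ of Corollary \ref{C-c} by routing the quadric case through Lemma \ref{L-C on Q}, which in turn requires knowing $d>4$ and that $C$ is not a complete intersection --- both consequences of $g<T_3$ itself. Everything else reduces to the two elementary comparisons $d^{3/2}-2d+1\geq T_3$ and $T_2\geq T_3$, which express the conceptual point that the triple-structure threshold $T_3$ is the weakest of the three and therefore controls all $m\leq 3$ simultaneously.
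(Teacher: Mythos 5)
Your proof is correct and follows the same route the paper intends with its one-line ``combining with Corollary \ref{C-c}'': the $m=3$ case is Theorem \ref{T-triple}, the $m=2$ case is Corollary \ref{C-c} with the $h^0(\ic_C(2))=0$ hypothesis disposed of via Lemma \ref{L-C on Q} (exactly the point the paper's earlier remark calls ``harmless''), and the $m=1$ case is the complete-intersection genus formula. Your write-up simply makes explicit the numerical comparisons $T_1\geq T_2\geq T_3$ and the fact that $g<T_3$ already forces $d\geq 7$ and $C$ not a complete intersection, which the paper leaves to the reader.
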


By the way let us observe the following elementary fact:

\begin{lemma} Let $C \subset \Pt$ be a smooth, connected curve of degree $d$, genus $g$. Let $s$ denote the minimal degree of a surface containing $C$. Assume $C$ is the set theoretic complete intersection of two surfaces of degrees $a,b; a \leq b$ and that $a$ is minimal with respect to this property. Let $md=ab$. If $a > s$ or if $h^0(\ic _C(s)) > 1$, then $m \geq d/s^2$.
\end{lemma}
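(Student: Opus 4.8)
The plan is to compare the two surfaces that cut out $C$ set-theoretically with a surface of minimal degree $s$ through $C$, and to turn the set-theoretic hypothesis into a numerical bound via Bézout. The basic observation I would establish first is that \emph{every} surface $G$ of degree $s$ containing $C$ is integral: since $C$ is irreducible it lies in a single irreducible component $G_1$ of $G$, and as $\deg(G_1)\geq s$ by minimality of $s$ while $\deg(G_1)\leq\deg(G)=s$, one gets $G=G_1$. This is precisely what makes the relevant intersections proper, unless such a $G$ is literally a component of one of the $F_a, F_b$.

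I would then split along the two cases of the hypothesis. If $h^0(\ic_C(s))>1$, choose two \emph{distinct} members $G\neq G'$ of the linear system of degree-$s$ surfaces through $C$. Both are integral of degree $s$ and distinct, hence share no component, so $G\cap G'$ is a genuine complete intersection curve of degree $s^2$ containing $C$; this gives $d\leq s^2$. Since $md=ab\geq\deg(C)=d$ forces $m\geq 1$, we conclude $m\geq 1\geq d/s^2$ at once.

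For the case $a>s$, I would pick an integral surface $G_s$ of degree $s<a$ through $C$ and aim to show $G_s\cap F_a$ is a proper complete intersection of degree $sa$ containing $C$, yielding $d\leq sa$, hence $a\geq d/s$; then from $b\geq a\geq d/s$ one gets $ab\geq d^2/s^2$ and therefore $m=ab/d\geq d/s^2$. The hard part — and the only real obstacle — is to rule out that $G_s$ is a component of $F_a$. Here the plan is to invoke the minimality of $a$: if $G_s$ were a component of $F_a$, write $F_a=G_s\cup F'$; from $Supp(F_a\cap F_b)=C$ one deduces $Supp(G_s\cap F_b)=C$, which is one-dimensional, so $G_s$ is \emph{not} a component of $F_b$, and consequently $C$ is realized as a set theoretic complete intersection of surfaces of degrees $s$ and $b$ with $s<a$ — contradicting the minimality of $a$. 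Hence $G_s$ cannot divide $F_a$, the intersection $G_s\cap F_a$ is proper, and the numerical chain above goes through.

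I expect the delicate point to be exactly this reduction in the second case: one must check carefully that, after deleting the component $G_s$ from $F_a$, the residual set-theoretic intersection $G_s\cap F_b$ genuinely recovers $C$ and is truly one-dimensional (so that $G_s\not\subseteq F_b$), which is what licenses the appeal to the minimality of $a$. Everything else is a direct Bézout count once integrality of the minimal surface is in hand.
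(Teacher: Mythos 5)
Your proof is correct, and it takes a genuinely different route from the paper's. The paper argues algebraically: for $S$ an integral surface of minimal degree $s$ through $C$, the power $S^m$ lies in $\ic_C^m$, hence vanishes on the complete intersection $F_a\cap F_b$ (whose multiplicity along $C$ is $m$), and since a complete intersection is arithmetically Cohen--Macaulay this yields a relation $S^m=AF_a+BF_b$; if $b>sm$, degree reasons force $S^m=AF_a$, so $F_a$ is a power of $S$, which is excluded by the minimality of $a$ combined with the hypothesis ($a>s$ or $h^0(\ic_C(s))>1$); hence $b\leq sm$, i.e. $m\geq b/s$, and then $m^2\geq ab/s^2=md/s^2$. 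You instead run a direct B\'ezout count: when $h^0(\ic_C(s))>1$ two distinct integral degree-$s$ surfaces meet properly, so $d\leq s^2$ and the bound is trivially satisfied by $m\geq 1$; when $a>s$ you show $G_s\cap F_a$ is proper --- your use of the minimality of $a$ to rule out $G_s$ being a component of $F_a$ is exactly right, since in that case $G_s\cap F_b$ would equal $C$ set-theoretically and be one-dimensional --- whence $d\leq sa$, i.e. $a\geq d/s$, and $m=ab/d\geq a^2/d\geq d/s^2$. It is worth noting that your intermediate bound $a\geq d/s$ is arithmetically equivalent to the paper's $b\leq sm$ (both amount to $ab/d\geq b/s$), so the two arguments are quantitatively identical; what yours buys is the elimination of the commutative-algebra input ($\ic_C^m\subseteq (F_a,F_b)$ and the ACM property of complete intersections) in favour of elementary intersection theory, at the cost of a two-case analysis where the paper's single argument treats both hypotheses uniformly. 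Both proofs ultimately rest on the same two pillars: the integrality of a minimal-degree surface containing the irreducible curve $C$, and the minimality of $a$.
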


\begin{proof} Assume $C = F_a \cap F_b$ as sets with $a \leq b$ and $ab=md$. If $S \in H^0(\ic _C(s))$, then $S^m \in H^0_*(\ic _X)$, where $X$ denotes the $m-1$-th infinitesimal neighbourhood of $C$ ($\ic _X = \ic _C^m$). It follows that $S^m \in (F_a, F_b)$. So $S^m = AF_a+BF_b$. If $b > sm$, then $S^m = AF_a$ and since $S$ is integral, we get $S^t = F_a$. It follows that $S \cap F_b=C$ as sets. By minimality of $a$, it follows that $F_a=S$. This is excluded by our assumptions ($a > s$ or $h^0(\ic _C(s))>1$). So $b \leq sm$. Thus $m \geq b/s$, hence $m^2 \geq ab/s^2 =md/s^2$ and the result follows.
\end{proof}

Let $C \subset Q$, $Q$ a smooth quadric surface. Assume $C$ is the s.t.c.i. of two surfaces of degrees $a, b$. Then if $d > 3$ and $C$ is not a complete intersection, it is easy to see that $b \geq a > 2$. Hence $m \geq d/4$, where $dm=ab$.



\end{document}